


\documentclass[12pt]{article}

\usepackage[all]{xypic}
\usepackage{amsmath}
\usepackage{amsfonts}
\usepackage{amssymb}
\usepackage{amsthm}
\usepackage{color}

\usepackage[top=2cm, bottom=2cm, left=2cm, right=2cm]{geometry}

\theoremstyle{plain}
\newtheorem{thm}{Theorem}[section]

\theoremstyle{plain}

\newtheorem{prp}[thm]{Proposition}

\newtheorem{lmm}[thm]{Lemma}

\newtheorem{conj}[thm]{Conjecture}

\newtheorem{note}[thm]{Note}


\newcommand {\mb}{\mathbb}
\newcommand {\Z}{\mb Z}
\newcommand {\R}{\mb R}
\newcommand {\C}{\mb C}

\newcommand {\colim}{\textrm{colim}\ }

\newcommand {\lra}{\longrightarrow}



\begin{document}

\title{On Freudenthal theorem, Kahn-Priddy Theorem, and Curits conjecture}

\author{Hadi Zare\\
        School of Mathematics, Statistics,
        and Computer Sciences\\
        University of Tehran, Tehran, Iran\\
        \textit{email:hadi.zare} at \textit{ut.ac.ir}}
\date{}

\maketitle

\begin{abstract}
We verify Curtis conjecture on a class of elements of ${_2\pi_*^s}$ that satisfy a certain factorisation property. To be more precise, suppose $f\in{_2\pi_n^s}$ pulls back to $g\in{_2\pi_n^s}P$ through the Kahn-Priddy map $\lambda:QP\to Q_0S^0$ such that $g$ projects nontrivially to an element $g'\in{_2\pi_n^s}P_{t(n)}$ with $h(g')=0$ where $h:{_2\pi_*}QP_k\to H_*QP_k$ is the unstable Hurewicz map, and $t(n)=\lceil n/2\rceil$. Then, mod out by elements of ${_2\pi_*^s}\simeq{_2\pi_*}QS^0$ satisfying this property, the Curtis conjecture on the image of $h:{_2\pi_*}QS^0\to H_*QS^0$ holds.
\end{abstract}

\textbf{AMS subject classification:$55Q45,55P42$}

\tableofcontents

\section{Introduction and statement of results}
Let $QS^0=\colim \Omega^iS^i$ be the infinite loop space associated to the sphere spectrum. Curtis conjecture then reads as follows
(see \cite[Proposition 7.1]{Curtis} and \cite{Wellington} for more discussions).

\begin{conj}[{Curtis Conjecture}]\label{conjecture}
The image of the unstable Hurewicz homomorphism $h:{_2\pi_*^s}\simeq{_2\pi_*}QS^0\lra H_*QS^0$ is given by
$$\Z/2\{h(\eta),h(\nu),h(\sigma),h(\theta_i)\}$$
where $\eta,\nu,\sigma$ are the Hopf invariant one elements and $\theta_i\in{_2\pi_{2^{i+1}-2}^s}$ are Kervaire invariant elements.
\end{conj}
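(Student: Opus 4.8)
The plan is to prove the two inclusions separately. For the inclusion $\supseteq$ I would verify directly that each listed class lies in the Hurewicz image: $\eta,\nu,\sigma$ have Hopf invariant one, so $h(\eta),h(\nu),h(\sigma)$ are the nonzero primitive generators of $H_1,H_3,H_7$ of $QS^0$, while each Kervaire element $\theta_i\in{_2\pi_{2^{i+1}-2}^s}$ has $h(\theta_i)\neq 0$ because its mod-$2$ Hurewicz image is the primitive Dyer--Lashof class detected by the Kervaire invariant. This fixes the right-hand side as a specific set of primitive spherical classes, so that the real content is the reverse inclusion: every $f\in{_2\pi_n}QS^0$ with $h(f)\neq 0$ must lie in this span.

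For $\subseteq$ the plan is to transport the question to $QP$ through the Kahn--Priddy map $\lambda:QP\to Q_0S^0$. Since $\lambda_*:{_2\pi_n}QP\to{_2\pi_n}Q_0S^0$ is surjective for $n>0$, any $f$ with $h(f)\neq 0$ lifts to some $g\in{_2\pi_n}QP$, and one replaces the study of spherical classes of $QS^0$ by that of $QP$. The advantage is structural: $H_*QP$ is generated under the Dyer--Lashof operations by a single class in each positive degree coming from $H_*P$, and primitivity together with the action of the dual Steenrod operations sharply restricts which classes can be spherical. I would combine this with the Lambda-algebra and unstable Adams spectral sequence description of the Hurewicz image, in the spirit of Curtis and Wellington, to argue that a surviving spherical class forces $g$, and hence $f$, into the Hopf/Kervaire family, these being precisely the permanent cycles of Adams filtration $\le 2$ (the $h_i$ and $h_i^2$).

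To make this quantitative I would pass to the metastable range: project $g$ to $g'\in{_2\pi_n}QP_{t(n)}$ with $t(n)=\lceil n/2\rceil$, and use the Freudenthal suspension theorem to compare the unstable Hurewicz image on $QP_{t(n)}$ with its stable counterpart, running an induction on $n$ along the skeletal filtration of $P$. If one can establish $h(g')=0$ for every $f$ lying outside the Hopf/Kervaire family, then the conjecture follows for that $f$, and hence for all of ${_2\pi_*}QS^0$.

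The hard part — and exactly the reason this remains a conjecture rather than a theorem — is to prove the vanishing $h(g')=0$ \emph{unconditionally}, that is, to rule out sporadic spherical classes in $H_*QP_{t(n)}$ that are invisible to the low-filtration (Hopf/Kervaire) part of the spectral sequence, without assuming this as a hypothesis. This is entangled with the existence problem for the $\theta_i$ and with the fine structure of the image of the $J$-homomorphism, so I do not expect any purely formal argument to dispose of it. My proposal therefore isolates the full conjecture into this single universal factorisation statement, and I regard establishing it in general — as opposed to conditioning on it — as the genuine obstacle.
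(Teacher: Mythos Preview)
This statement is a conjecture, and the paper does not prove it; it isolates the obstruction and proves the conjecture modulo a class of elements described in the abstract. Your overall architecture---lift through the Kahn--Priddy map, project to $QP_{t(n)}$ with $t(n)=\lceil n/2\rceil$, and exploit the metastable range---is exactly the paper's framework, and your acknowledgement that the argument cannot be closed is appropriate.

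Where your proposal goes wrong is in locating the obstruction: you have the hard and easy halves reversed. You assert that the Freudenthal comparison gives the implication $h(g')=0\Rightarrow h(f)=0$, and identify the hard step as establishing $h(g')=0$ for $f$ outside the Hopf/Kervaire families. In fact the paper \emph{proves} the latter: Theorem~\ref{main2} shows that if $h(f)\neq 0$ and $h(g_{t(n)})\neq 0$ then $f$ is a Hopf invariant one element (for $n$ odd) or a Kervaire invariant one element (for $n$ even); contrapositively, any $f$ with $h(f)\neq 0$ outside these families automatically has $h(g')=0$. The genuinely open case, stated explicitly at the start of Section~\ref{sec:reduction}, is precisely $h(f)\neq 0$ with $h(g_{t(n)})=0$. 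Your Freudenthal claim does not dispose of this: vanishing of the \emph{unstable} Hurewicz image of $g'$ in $H_*QP_{t(n)}$ does not force vanishing of $h(g)$ in $H_*QP$ (the projection $QP\to QP_{t(n)}$ has a large homological kernel), and Theorem~\ref{main1} only applies once one has either stable Hurewicz vanishing on $P_{t(n)}$ or a factorisation of $g^s$ through a sufficiently low skeleton of $P$, neither of which is implied by $h(g')=0$. The paper attacks this remaining case only by case-by-case reductions along the skeletal filtration using Adams spectral sequence input (the cases $i=0,\dots,9$ in Section~\ref{sec:reduction}), and does not resolve it in general.
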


After work of Hill, Hopkins and Ravenel \cite{HHR} we know that the elements $\theta_i$ are not to exist for $i>6$. So, as a consequence the above conjecture implies that the image of $h$ is finite.\\

Here and throughout, we write ${_p\pi_*^s}$ and ${_p\pi_*}$ for the $p$-components of $\pi_*^s$ and $\pi_*$, respectively. We also write $H_*$ for $H_*(-;k)$ where the coefficient ring will be clear from the context with an interest in $k=\Z/p$; although some of our results such as Theorem \ref{main1} holds when $k$ is an arbitrary $p$-local commutative coefficient ring. We work $p$-locally or $p$-complete which again will be clear from the context. \\

\section{Preliminaries}
We wish to examine Conjecture \ref{conjecture} in the light of Freudenthal theorem and Kahn Priddy theorem. Suppose $E$ is a connected and connective spectrum, i.e. $\pi_iE\simeq0$ for $i<1$. Write $\Omega^\infty E=\colim \Omega^iE_i$ for the infinite loop space associated to $E$, $QX$ for $\Omega^\infty(\Sigma^\infty X)$ when $X$ is a space, and $\epsilon:\Sigma^\infty \Omega^\infty E\to E$ for the evaluation map which is the stable adjoint to the identity $\Omega^\infty E\to\Omega^\infty E$. The map $\epsilon$ induces stable homology suspension $H_*\Omega^\infty E\to H_*E$. For a spectrum $E$, we refer to the Hurewicz homomorphism $h^s:\pi_*E\to H_*E$ as the stable Hurewicz homomorphism whereas, in positive degrees, we refer to $h:\pi_*E\simeq\pi_*\Omega^\infty E\to H_*\Omega^\infty E$ as the unstable Hurewicz homomorphism. The following was proved in \cite[Theorem 1.3]{Zare-PEMS}.

\begin{thm}\label{main1}
Suppose $f:S^n\lra QS^0$ is given so that for some spectrum $E$ there is a factorisation $S^n\stackrel{f_E}{\lra}\Omega^\infty E\lra QS^0$. Then, the following statements hold.\\
(i) If $E$ is $r$-connected, $n\leqslant 2r+1$, and $h^s(f_E)=0$, then $h(f)=0$. In particular, if $E$ is a $CW$-spectrum and $f_E$ maps trivially under $p$-local stable Hurewicz map $h^s_{(p)}:{_p\pi_*}E\to H_*(E;\Z/p)$ only for some prime $p$ then $f$ maps trivially under the $p$-local unstable Hurewicz map ${_p\pi_*}QS^0\to H_*QS^0$. A similar statement holds in the $p$-complete setting.\\
(ii) If the above factorisation is induced by a factorisation of a map of spectra
$S^n\stackrel{\alpha}{\to}E\stackrel{c}{\to}S^0$ where $E$ is a finite $CW$-spectrum of dimension $r$, $n\geqslant 2r+1$, and $c_*=0$ then $h(f)=0$. In particular, if $c_*=0$ holds $p$-locally only for some prime $p$ then $f$ maps trivially under the $p$-local unstable Hurewicz map ${_p\pi_*}QS^0\to H_*QS^0$. A similar statement holds in the $p$-complete setting.
\end{thm}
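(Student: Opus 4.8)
The common mechanism is the compatibility $\sigma_*\circ h=h^s$, where $\sigma_*\colon\tilde H_*\Omega^\infty E\to H_*E$ is the stable homology suspension induced by $\epsilon$. Writing $g\colon\Omega^\infty E\to QS^0$ for the second map of the given factorisation (so $g=\Omega^\infty c$ in part (ii)), we have $f=g\circ f_E$ and hence $h(f)=g_*\,h(f_E)$; the task in each part is therefore to locate $h(f_E)$ inside $H_*\Omega^\infty E$ and to push it forward by $g_*$.

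For part (i): since $E$ is $r$-connected, the space $\Omega^\infty E$ is $r$-connected and $H_*E$ is concentrated in degrees $\ge r+1$, so in $\tilde H_*\Omega^\infty E$ neither a product of two positive-dimensional classes nor a nontrivial Dyer--Lashof operation can occur below degree $2r+2$. A cell-by-cell/colimit argument — using that for $m\ge r+1$ one has $\tilde H_iQS^m=H_i\mathbb S^m$ for $i\le 2m-1$, which covers $i\le 2r+1$ — then shows that $\sigma_*$ is an isomorphism in degrees $\le 2r+1$. As $n\le 2r+1$, the hypothesis $h^s(f_E)=\sigma_*h(f_E)=0$ forces $h(f_E)=0$, whence $h(f)=g_*h(f_E)=0$. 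Since this is insensitive to the coefficient ring, to $p$-localisation, and to $p$-completion, the "in particular" assertion follows by rerunning the argument with $\Z/p$-coefficients in the relevant setting.

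For part (ii) I would run the dual argument. Because $\dim E=r<n$ we have $H_nE=0$, so $h^s(f_E)=\sigma_*h(f_E)=0$ automatically and $h(f_E)\in\ker\sigma_*$, which by the structure theory for $H_*\Omega^\infty E$ is spanned by decomposables and proper Dyer--Lashof operations (there is no "linear", i.e.\ $H_*E$-, contribution). Now $g=\Omega^\infty c$ is an infinite loop map, so $g_*$ is multiplicative and commutes with all $Q^s$; moreover $c_*=0$ gives $(c^{\wedge k})_*=0$, and a skeletal-filtration comparison over $B\Sigma_k$ forces the induced maps on extended powers $D_kc\colon D_kE\to D_k\mathbb S$ to vanish on homology. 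Organising the pushforward via the fibre sequence $\Omega^\infty E\xrightarrow{g}QS^0\xrightarrow{\Omega^\infty j}\Omega^\infty C$ obtained by applying $\Omega^\infty$ to the cofibre sequence $E\xrightarrow{c}\mathbb S\xrightarrow{j}C$ (here $c_*=0$ makes $j_*$ split and $C$ have cells only in degree $0$ and in degrees $\le r+1$), and feeding in the above constraints on $h(f_E)$ and on $g_*$, should yield $g_*h(f_E)=0$, i.e.\ $h(f)=0$; the $p$-local and $p$-complete versions then follow as in part (i).

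The step I expect to be the main obstacle is making this last computation rigorous. The extended-power (weight) filtration of $H_*QS^0$ is not natural for the spectrum map $c$, and $H_*\Omega^\infty C$ is governed by a Kudo--Araki--May-type spectral sequence whose differentials are dictated by the attaching maps of $C$, which sit in degrees $\le r+1$. One must arrange the bookkeeping so that these differentials, and the failure of naturality of the weight filtration, cannot resurrect a nonzero $h(f)$; this is precisely where the inequality $n\ge 2r+1$ — equivalently $\dim(E\wedge E)\le n-1$ and $\dim C\le r+1$ — is indispensable. Part (i), by contrast, should be routine once the metastable identification of $h$ with $h^s$ is in hand.
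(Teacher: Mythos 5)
First, note that this paper does not actually prove Theorem \ref{main1}; it quotes it from \cite[Theorem 1.3]{Zare-PEMS}, so there is no internal proof to compare against and your proposal has to stand on its own. Your part (i) does: for $r$-connected $E$ the stable homology suspension $\sigma_*\colon \widetilde H_i\Omega^\infty E\to H_iE$ is an isomorphism for $i\leqslant 2r+1$, and your cell-by-cell justification can be made honest (a cofibre sequence of spectra is a fibre sequence, so $\Omega^\infty$ turns the skeletal filtration into fibrations and the Serre spectral sequence, or simply the $(2r+1)$-connectivity of the evaluation $\Sigma^\infty\Omega^\infty E\to E$, gives the range you quote); then $h(f_E)=0$ and $h(f)=g_*h(f_E)=0$, with $\Z/p$, $p$-local or $p$-complete coefficients as needed. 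This is the same metastable mechanism the statement is designed around, so (i) is fine.

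Part (ii), however, has a genuine gap, and the mechanism you sketch cannot be repaired in the form you give it. What you actually establish is only that $h(f_E)\in\ker\sigma_*$ (which uses just $n>r$, not $n\geqslant 2r+1$); the rest — that $g_*=(\Omega^\infty c)_*$ annihilates this kernel because $c_*=0$ forces $(D_kc)_*=0$ — is exactly the step you concede is missing, and it is false as an implication. The failure of naturality of the weight filtration for $\Omega^\infty$ of a stable map is not a bookkeeping nuisance: vanishing of all $(D_kc)_*$ puts no constraint on what $(\Omega^\infty c)_*$ does to decomposables or to $Q^I$-classes. Concretely, take $E=\Sigma^\infty P^1=\Sigma^\infty S^1$ and $c$ the restriction of the Kahn--Priddy transfer, i.e.\ $c=\eta$, so $c_*=0$ and all $(D_kc)_*=0$; let $f_E\colon S^2\to QS^1$ be the adjoint of $\eta$, so $h(f_E)=a_1^2$ is decomposable and lies in $\ker\sigma_*$. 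Nevertheless $(\Omega^\infty c)_*a_1=x_1$ and hence $(\Omega^\infty c)_*(a_1^2)=x_1^2\neq0$ — which is precisely why $h(\eta^2)\neq0$. This does not contradict the theorem (here $n=2<2r+1=3$), but it shows that the inputs your argument feeds in ($c_*=0$, vanishing on extended powers, $h(f_E)\in\ker\sigma_*$) are insufficient: the inequality $n\geqslant 2r+1$ must be used structurally, not merely to guarantee $H_nE=0$, and your proposal (including the appeal to the fibre sequence $\Omega^\infty E\to QS^0\to\Omega^\infty C$ and a Kudo--Araki--May-type spectral sequence) never reaches the point where it is. So (ii) remains unproved in your write-up.
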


We also recall that as a corollary of the above theorem, we have proved that \cite[Theorem 1.8]{Zare-PEMS}

\begin{thm}\label{main0}
The image of $h:\pi_*^s\simeq{\pi_*}QS^0\to H_*(QS^0;\Z)$ when restricted to the submodule of decomposable elements is given by
$\Z\{h(\eta^2),h(\nu^2),h(\sigma^2)\}$.
\end{thm}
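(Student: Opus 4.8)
The plan is to compute $h$ on a typical generator $\alpha\beta$ of the decomposable submodule, with $\alpha\in\pi_a^s$, $\beta\in\pi_b^s$ and $a,b\geqslant 1$, and to show that $h(\alpha\beta)=0$ except when $a=b\in\{1,3,7\}$, in which case $h(\alpha\beta)$ is $0$ or equals $h(\eta^2)$, $h(\nu^2)$, $h(\sigma^2)$ according as $a=1,3,7$. Together with the (classical) nonvanishing of those three classes — which lie in degrees $2,6,14$ and have order $2$ — this identifies the image with $\Z\{h(\eta^2),h(\nu^2),h(\sigma^2)\}\cong(\Z/2)^3$.

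The key is a factorisation observation. Take $a\geqslant b$; then $\alpha\beta$ is represented by the stable composite $S^{a+b}=S^a\wedge S^b\xrightarrow{1\wedge\beta}S^a\xrightarrow{\alpha}S^0$. Suppose $\beta$ lifts through the stabilisation map $\pi_{a+b}(S^a)\to\pi_b^s$ to an unstable class $\bar\beta\in\pi_{a+b}(S^a)$. Unwinding the $(\Sigma^\infty,\Omega^\infty)$-adjunction, the unstable representative of $\alpha\beta$ then factors as $S^{a+b}\xrightarrow{\bar\beta}S^a\xrightarrow{\alpha}QS^0$, the last arrow being the unstable representative of $\alpha\in\pi_aQS^0$; hence $h(\alpha\beta)$ factors through $H_{a+b}(S^a;\Z)$. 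Since $a+b>a\geqslant 1$ this group vanishes, so $h(\alpha\beta)=0$. By the Freudenthal theorem $\pi_{a+b}(S^a)\to\pi_b^s$ is onto whenever $a\geqslant b+1$, which already settles every product with $a\neq b$; alternatively this case is immediate from Theorem~\ref{main1}(ii) applied to $S^{a+b}\xrightarrow{1\wedge\beta}S^b\xrightarrow{\beta}S^0$ with the finite spectrum $S^b$ of dimension $b$, since $a+b\geqslant 2b+1$ and $\beta$ is trivial on reduced homology.

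For $a=b$ the stabilisation $\pi_{2a}(S^a)\to\pi_a^s$ may fail to be onto; via the EHP sequence its cokernel is governed by the stable mod-$2$ Hopf invariant, and by Adams' solution of the Hopf invariant one problem this cokernel is $0$ unless $a\in\{1,3,7\}$, in which case it is $\Z/2$, represented by $\eta$, $\nu$, $\sigma$. So for $a\notin\{1,3,7\}$ every $\beta$ lifts and the factorisation gives $h(\alpha\beta)=0$; for $a\in\{1,3,7\}$, if either factor lifts then $h(\alpha\beta)=0$, and otherwise, writing $\alpha=\alpha_0+g$, $\beta=\beta_0+g$ with $\alpha_0,\beta_0$ lifting and $g$ the Hopf element, we expand $h(\alpha\beta)=h(\alpha_0\beta_0)+h(\alpha_0g)+h(g\beta_0)+h(g^2)$; the first three terms vanish because each product has a lifting factor, so $h(\alpha\beta)=h(g^2)$. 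The nonvanishing of $h(\eta^2),h(\nu^2),h(\sigma^2)$ is the standard low-degree computation of the unstable Hurewicz map — these are the first three Kervaire classes $\theta_1,\theta_2,\theta_3$, which are detected in $H_*(QS^0;\Z/2)$. Running the argument prime by prime, at an odd prime $p$ the cokernel of $\pi_{2a}(S^a)\to{_p\pi_a^s}$ vanishes for all $a$ (the obstruction above is $2$-torsion), so every $p$-local decomposable dies; reassembling over all primes gives the integral statement.

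The one genuinely deep input is the description of $\cok(\pi_{2a}(S^a)\to\pi_a^s)$: this is precisely where Adams' Hopf invariant one theorem enters, and it is the reason only $\eta,\nu,\sigma$ survive. Everything else is either the adjunction bookkeeping that identifies $S^{a+b}\xrightarrow{\bar\beta}S^a\xrightarrow{\alpha}QS^0$ with the product $\alpha\beta$ — where the only care needed is to keep stable and unstable maps straight and to use the correct (stable, mod-$2$) Hopf invariant — or well-known low-dimensional facts about $\pi_*^s$ together with the nonvanishing of $h$ on $\eta^2,\nu^2,\sigma^2$; I do not expect a real obstacle in those.
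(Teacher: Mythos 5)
Your argument is correct in substance, and it is consistent with the route this paper relies on: the paper does not reprove Theorem \ref{main0} here (it quotes it from \cite[Theorem 1.8]{Zare-PEMS} as a corollary of Theorem \ref{main1}), and your treatment of the case $a\neq b$ is exactly that mechanism — factoring $\alpha\beta$ stably through a sphere of dimension $b$ with $a+b\geqslant 2b+1$ and a homologically trivial map to $S^0$, i.e.\ Theorem \ref{main1}(ii). The genuinely additional content you supply is the diagonal case $a=b$, where Theorem \ref{main1}(ii) does not apply; your desuspension argument (if one factor is born on $S^a$, the unstable representative factors through $S^a$ and $H_{2a}S^a=0$), together with the identification of $\mathrm{coker}(\pi_{2a}S^a\to\pi_a^s)$ as $\Z/2$ on the Hopf element for $a=1,3,7$ and $0$ otherwise, and the classical nonvanishing of $h(\eta^2),h(\nu^2),h(\sigma^2)$, does close the proof. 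Two repairs are needed. First, the displayed composite ``$S^{a+b}\xrightarrow{1\wedge\beta}S^b\xrightarrow{\beta}S^0$'' should have $\Sigma^b\alpha$, not $1\wedge\beta$, as its first map. Second, your justification of the odd-primary part of the cokernel claim (``the obstruction above is $2$-torsion'') is not right as stated, since the James--Hopf obstruction is integer-valued; the claim itself is true and is best proved integrally in the metastable range: for $a$ even, $\pi_{2a+1}S^{a+1}$ is finite, so every preimage has vanishing Hopf invariant and the class desuspends, while for $a$ odd one corrects a preimage by multiples of $[\iota_{a+1},\iota_{a+1}]$, of Hopf invariant $\pm 2$, so the only obstruction is an odd Hopf invariant, excluded by Adams except for $a=1,3,7$. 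With that sentence replaced, your proof is complete and needs no prime-by-prime reassembly.
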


Next, recall that by Kahn-Priddy theorem \cite[Proposition 3.1]{Kahn-Priddy} the map $\lambda:QP\to Q_0S^0$ induces an epimorphism
$$\lambda_*:{_2\pi_*^sP}\simeq{_2\pi_*}QP\to{_2\pi_*}Q_0S^0\simeq{_2\pi_*^s}.$$
Moreover, the map $\lambda$ is induced by an infinite loop extension of a map of suspension spectra $t:P\to S^0$ where $P$ is the infinite dimensional real projective space with its $0$-skeleton as its base point. Note that by dimensional reasons $t_*=0$.

\section{Preparatory lemmata}
We begin with fixing some notation. We use $P$ for the infinite dimensional projective space and $P^n$ for its $n$-skeleton, and for $0<k<n\leqslant+\infty$ we set $P^n_k=P^n/P^{k-1}$ as well as $P^n_n=S^n$. For $f\in{_2\pi_n}Q_0S^0$, we write $g\in{_2\pi_*}QP$ for any element with $\lambda g=f$ where $\lambda$ is the Kahn-Priddy map. It is known that $\lambda$ is a split epimorphism whose inverse is provided by a map $t:Q_0S^0\to QP$ with $t=\Omega j_2$ where $j_2:QS^1\to Q\Sigma P$ is the second stable James-Hopf map \cite[Corollary 2.14]{Kuhnhomology}. For a map of spaces $f:X\to QY$, $QY=\colim \Omega^i\Sigma^i Y$, we write for $f^s:X\to Y$ where we identify a based space $X$ with its suspensions spectrum $\Sigma^\infty X$.\\

For $g\in{_2\pi_n^s}P$, by cellular approximation, we may consider $g$ as $g:S^n\to P^n$. We ask what is the least $k$ so that $g$ also factors as $S^n\to P^k\to P^n$. Given $g^s\in{_2\pi_n^s}P^n$ and $k>0$ we shall write $g^s_k:S^n\to P_k^n$ and by abuse of notation for the composition $S^n\to P^n_k\to P_k$.

For the sake of studying Conjecture \ref{conjecture}, applying Theorem \ref{main1}, provides us with the following lower bound result.

\begin{lmm}\label{lowerbound1}
Let $k>0$. We have the followings.\\
(i) Suppose $f\in{_2\pi_{2k+1}}Q_0S^0$ is given with $h(f)\neq 0$. Then, $g$ does not factor through $QP^k$.\\
(ii) Suppose $f\in{_2\pi_{2k}}Q_0S^0$ is given with $h(f)\neq 0$. Then, $g$ does not factor through $QP^{k-1}$.
\end{lmm}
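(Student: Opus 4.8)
The plan is to prove both parts by contradiction, applying Theorem~\ref{main1}(ii) with the finite $CW$-spectrum $E=\Sigma^\infty P^k$ in case (i) and $E=\Sigma^\infty P^{k-1}$ in case (ii); the two cases differ only in the dimension bookkeeping.

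For (i), suppose toward a contradiction that $g\colon S^{2k+1}\to QP$ factors as $g\simeq\iota\circ\tilde g$ with $\tilde g\colon S^{2k+1}\to QP^k$ and $\iota\colon QP^k\to QP$ induced by the inclusion $P^k\hookrightarrow P$. Then $f=\lambda g\simeq(\lambda\circ\iota)\circ\tilde g$. First I would note that, by Kahn--Priddy, $\lambda\circ\iota$ is the infinite loop map $\Omega^\infty c$ associated to the composite of spectra $c\colon\Sigma^\infty P^k\xrightarrow{\Sigma^\infty(\mathrm{incl})}\Sigma^\infty P\xrightarrow{t}S^0$, since $\lambda$ is an infinite loop extension of $\Omega^\infty t$. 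Second, since $QP^k=\Omega^\infty\Sigma^\infty P^k$ and $\pi_{2k+1}QP^k\cong\pi_{2k+1}^sP^k$ canonically, the map $\tilde g$ corresponds to a stable class $\alpha\colon S^{2k+1}\to\Sigma^\infty P^k$, and the above space-level factorization of $f$ through $QP^k$ is precisely the one induced by the factorization of spectra $S^{2k+1}\xrightarrow{\alpha}\Sigma^\infty P^k\xrightarrow{c}S^0$ required in the hypothesis of Theorem~\ref{main1}(ii). (We may harmlessly take $\tilde g$, hence $\alpha$, to be $2$-primary, by splitting off the $2$-primary summand of $\pi_{2k+1}QP^k$.)

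Here $\Sigma^\infty P^k$ is a finite $CW$-spectrum of dimension $r=k$, and $n=2k+1=2r+1\geqslant 2r+1$, so the dimension hypothesis of Theorem~\ref{main1}(ii) is met (with equality). Moreover $c_*=0$ on mod $2$ homology: $\tilde H_*(\Sigma^\infty P^k;\Z/2)$ is concentrated in degrees $1,\dots,k$ while $\tilde H_*(S^0;\Z/2)$ lives in degree $0$; equivalently $c_*=0$ because already $t_*=0$ by dimensional reasons. Hence Theorem~\ref{main1}(ii), in its $2$-local form, gives $h(f)=0$, contradicting $h(f)\neq 0$, and (i) follows. Part (ii) is the same argument with $P^{k-1}$ in place of $P^k$: a factorization of $g\colon S^{2k}\to QP$ through $QP^{k-1}$ would produce a factorization of spectra $S^{2k}\xrightarrow{\alpha}\Sigma^\infty P^{k-1}\xrightarrow{c}S^0$ with $\Sigma^\infty P^{k-1}$ of dimension $r=k-1$, $n=2k\geqslant 2(k-1)+1=2r+1$, and $c_*=0$ on mod $2$ homology, so Theorem~\ref{main1}(ii) again forces $h(f)=0$, a contradiction.

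The only delicate point is the passage from the space-level factorization of $g$ through $QP^k$ to the spectrum-level factorization demanded by Theorem~\ref{main1}(ii) — namely identifying the restricted Kahn--Priddy map with $\Omega^\infty c$ and lifting $\tilde g$ to a genuine stable map — but this is purely formal, resting on $QP^k=\Omega^\infty\Sigma^\infty P^k$, the natural isomorphism $\pi_*QP^k\cong\pi_*^sP^k$, and the standard description of $\lambda$. No actual computation enters; the whole content is the dimension count (sharp in case (i)) feeding into Theorem~\ref{main1}(ii).
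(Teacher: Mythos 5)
Your proposal is correct and matches the paper's own argument: the paper likewise derives the factorisation $S^{2k+1}\to QP^k\to Q_0S^0$ from $P^k\to P\stackrel{t}{\to}S^0$ with $t_*=0$ and invokes Theorem~\ref{main1}(ii) (with $E=\Sigma^\infty P^k$, $r=k$, $n=2k+1\geqslant 2r+1$, and similarly for part (ii)) to conclude $h(f)=0$, contradicting the hypothesis. Your extra care in passing from the space-level factorisation to the spectrum-level one is just an explicit spelling-out of what the paper leaves implicit.
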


\begin{proof}
(i) In this case we have a factorisation of $f$ as
$$S^{2k+1}\to QP^k\to Q_0S^0$$
where the map $QP^k\to Q_0S^0$ is induced by applying $\Omega^\infty$ to $P^k\to P\to S^0$ with $t_*=0$. Theorem \ref{main1} implies that $f_*=0$.\\
(ii) This is similar to (i).
\end{proof}

Next, suppose $g^s:S^n\to P$ does not factor through $P^k$. Consider the cofibe sequence $P^k\to P\stackrel{q}{\to} P_{k+1}=P/P^k$ where $q$ is the projection. In this case, the composition $g^s_{k+1}=qg^s:S^{2k+1}\to\Sigma^\infty P_{k+1}$ represents a nontrivial element in ${_2\pi_n^s}P_{k+1}$. Note that $q_*$ is an isomorphism in $H_*(-;\Z/2)$ for $*\geqslant k+1$. We may ask, knowing $h(f)\neq 0$ and $h(g_k)\neq 0$, what can be said about $f$? We have the following.

\begin{thm}\label{main2}
(i) Suppose $f\in{_2\pi_{2k+1}}Q_0S^0$ is given with $h(f)\neq 0$ so that $h(g_{k+1})\neq 0$. Then, $f$ is a Hopf invariant one element.\\
(ii) Suppose $f\in{_2\pi_{2k}}Q_0S^0$ is given with $h(f)\neq 0$ so that $h(g_{k})\neq 0$. Then, $f$ is a Kervaire invariant one element.
\end{thm}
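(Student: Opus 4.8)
The plan is to combine the lower‑bound results of Lemma \ref{lowerbound1} with the Freudenthal‑type input of Theorem \ref{main1}(ii), now applied to the \emph{stunted} projective spectra $P_{k+1}$ and $P_k$ rather than to $P$ itself. First I would record the skeletal factorisation: since $h(f)\neq 0$, part (i) of Lemma \ref{lowerbound1} tells us $g^s:S^{2k+1}\to P$ does not factor through $P^{k}$, so the composite $g^s_{k+1}=q g^s:S^{2k+1}\to\Sigma^\infty P_{k+1}$ is a nontrivial stable class, and by hypothesis $h(g_{k+1})\neq 0$ in $H_*QP_{k+1}$. The point is that $P_{k+1}=P/P^{k}$ is a connective spectrum whose bottom cell is in dimension $k+1$, so it is $k$‑connected, and $2k+1\leqslant 2k+1$ puts us exactly at the Freudenthal borderline dimension $n=2r+1$ with $r=k$. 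I would then feed the factorisation $S^{2k+1}\xrightarrow{g_{k+1}}\Omega^\infty\Sigma^\infty P_{k+1}\to Q_0S^0$ (the second map induced by $P_{k+1}\to P\to S^0$, composed with the splitting, or rather built directly from $t$ restricted to the relevant skeleta) into Theorem \ref{main1}.

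The heart of the argument is a numerology/Hurewicz computation in $H_*\Omega^\infty\Sigma^\infty P_{k+1}$. Because $h(g_{k+1})\neq 0$ while $g_{k+1}$ lives in the bottom range of $P_{k+1}$, the nonzero Hurewicz image must be \emph{primitive indecomposable}, hence carried by the bottom cell $S^{k+1}\hookrightarrow P_{k+1}$ up to lower filtration — i.e. the class $g_{k+1}$ desuspends, or more precisely its image under $h^s:\pi_{2k+1}^s P_{k+1}\to H_{2k+1}P_{k+1}$ is detected on $S^{k+1}$, forcing the existence of an $\alpha\in\pi_{2k+1}^s S^{k+1}$ whose image in $P_{k+1}$ is $g_{k+1}$ and whose top attaching data realises a class of Hopf invariant one. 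Concretely: the nonvanishing of $h(g_{k+1})$ on the bottom cell, together with $q_*$ being an iso in degrees $\geqslant k+1$ and the known $H_*$-structure of $QP_{k+1}$ (Dyer–Lashof/Araki–Kudo operations on the bottom class $x_{k+1}$, where $x_{k+1}$ squares to $Q^{k+1}x_{k+1}$ in degree $2k+2$ and the relevant lower Dyer–Lashof operation lands in degree $2k+1$), pins $h(g_{k+1})$ to be $Q^{k}x_{k+1}$ or $x_{k+1}^2$-type data — and the classical fact that $Q^k x_{k+1}\neq 0$ transgresses back to the Hopf invariant means $f=\lambda g$ has Hopf invariant one. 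Part (ii) is the even analogue: $h(f)\neq 0$ in degree $2k$ gives via Lemma \ref{lowerbound1}(ii) that $g^s$ does not factor through $P^{k-1}$, so $g^s_k:S^{2k}\to\Sigma^\infty P_k$ is nontrivial with $h(g_k)\neq 0$; now $P_k$ is $(k-1)$‑connected and $2k=2(k-1)+2$, and the bottom Dyer–Lashof square $Q^{k}x_k=x_k^2$ in degree $2k$ is precisely the class that transgresses to the Kervaire invariant, so $f$ has Kervaire invariant one.

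The main obstacle I anticipate is making rigorous the step ``$h(g_{k+1})\neq 0$ forces the class to be detected on the bottom cell and to be the distinguished Dyer–Lashof class.'' This requires an explicit analysis of $H_*\Omega^\infty\Sigma^\infty P_{k+1}$ as a free allowable $R$‑algebra on $\tilde H_*P_{k+1}$ (à la Dyer–Lashof / May), identifying in total degree $2k+1$ (resp. $2k$) exactly which monomials can be hit by the Hurewicz map — i.e. only length‑one operations $Q^I x$ with $x$ a generator, since Hurewicz images are primitive and the primitives in that low degree range are spanned by $Q^k$ applied to $x_{k+1}$ and by the generators $x_j$ themselves, the latter being excluded by the connectivity/non‑factorisation constraint. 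One then has to quote the classical identification (Browder, Madsen, Wellington) of $Q^k x_{k+1}$ with the image of a Hopf‑invariant‑one element under $\lambda_*$, and of $x_k^2=Q^k x_k$ with a Kervaire‑invariant‑one element, closing the loop. A secondary subtlety is checking that the splitting $t:Q_0S^0\to QP$ is compatible with the skeletal filtration well enough that ``$\lambda g_{k+1}$'' is literally $f$ and not merely $f$ modulo a filtration‑jump error; this is handled by the fact that $q_*$ is an isomorphism in the relevant degrees, so no information is lost in passing from $P$ to $P_{k+1}$.
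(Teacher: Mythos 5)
Your part (i) rests on a Dyer--Lashof computation that is not correct. In $H_*QP_{k+1}$ the bottom class $a_{k+1}$ sits in degree $k+1$, so $Q^{k}a_{k+1}=0$ (operations below the degree of a class vanish) and $a_{k+1}^2=Q^{k+1}a_{k+1}$ lives in degree $2k+2$, not $2k+1$. Hence in total degree $2k+1$ there are no Dyer--Lashof classes and no decomposables at all: the only nonzero classes are the images of $H_{2k+1}P_{k+1}$, i.e.\ the generator $a_{2k+1}$ carried by the $(2k+1)$-cell --- which is exactly the possibility you discard (``the generators $x_j$ themselves, the latter being excluded by the connectivity/non-factorisation constraint''). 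The non-factorisation of $g^s$ through $P^{k}$ does not exclude that class; it is precisely the class the paper's proof identifies: by dimensional reasons $h(g_{k+1})=a_{2k+1}$, hence $h(g)=a_{2k+1}$ modulo $\ker q_*$, hence $h(f)=x_{2k+1}+\textrm{other terms}$, and the classical fact that $x_{2k+1}$ being spherical in $H_*Q_0S^0$ forces Hopf invariant one concludes (i). Your proposed mechanism via the nonexistent class $Q^kx_{k+1}$, and the claim that $h^s(g^s_{k+1})$ is ``detected on the bottom cell'' so that $g_{k+1}$ desuspends to some $\alpha\in\pi_{2k+1}^sS^{k+1}$, have no basis (the relevant Hurewicz image is in degree $2k+1$, not $k+1$); also Theorem \ref{main1} is a vanishing criterion and plays no role in this argument.

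In part (ii) you do land on the right class $a_k^2=Q^ka_k$, but the argument is incomplete in two places. First, $H_{2k}QP_k\simeq\Z/2\{a_{2k},a_k^2\}$, so you must rule out a component on $a_{2k}$; the paper does this by observing that $\epsilon_1=1$ would make $f$ a Hopf invariant one element in an even stem, which is impossible. Second, the passage from ``$a_k^2$ is spherical in $H_{2k}QP_k$'' to ``$f$ is a Kervaire invariant one element'' is not a one-line quotation: the paper invokes Eccles' theorem on codimension one immersions in the case $2k-1\equiv 1\ \mathrm{mod}\ 4$ (which in particular forces $k=2^i-1$), and Lannes' result to eliminate the case $2k-1\equiv 3\ \mathrm{mod}\ 4$ (reduced to ${_2\pi_4^s}\simeq 0$); alternatively the paper's subsequent Note gives an $A$-annihilation ($Sq^1_*$) argument to force $k$ odd. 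Without ruling out $a_{2k}$ and without this case analysis (or an equivalent substitute), your part (ii) does not yet yield the stated conclusion.
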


The proof is based on using knowledge on $H_*QX$ and $H_*Q_0S^0$ to which we refer the reader to \cite{CLM}. In particular, once and for all, we fix that $a_i\in H_iP^n$, likewise in $H_iP^n_k$, denotes a generator. We shall write $x_i\in H_iQ_0S^i$ for a generator which satisfies $\lambda_*a_i=x_i$.

\begin{proof}
(i) For $g_{k+1}=(\Omega^\infty q)g$, if $(g_{k+1})_*\neq 0$ then by dimensional reasons
$$h(g_{k+1})=a_{2k+1}$$
where $a_{2k+1}\in H_{2k+1}P_{k+1}$ is a generator. This implies that $h(g)=a_{2k+1}$ modulo $\ker((\Omega q)_*)$. Consequently,
$$h(f)=x_{2k+1}+\textrm{other terms}.$$
It is known that in this case $f$ has to be a Hopf invariant one element.\\
(ii) The effect of the Hurewicz homomorphism on ${_2\pi_2^s}\simeq\Z/2\{\eta^2=\theta_1\}$ is known. So, we assume $k>1$ and $f$ pulls back to an element $g:S^{2n}\to QP$. As vector spaces we have $H_{2k}QP_k\simeq\Z/2\{a_{2k},a_k^2\}$, so we may write
$$h(g_k)=\epsilon_1a_{2k}+\epsilon_2a_k^2$$
for some $\epsilon_i\in\Z/2$. If $\epsilon_1=1$ then an argument similar to the above shows that $f=\lambda g$ has to be a Hopf invariant one element. But, this is impossible for dimensional reasons; A Hopf invariant one element lives in odd dimensions. Therefore, $h(g_k)\neq 0$ implies that $h(g_k)=a_k^2$. Consequently, we have
$$h(g)=a_k^2+\textrm{other terms}\in H_{2k}QP.$$
\textbf{Case with $2k-1\equiv1\mathrm{mod}4$.} In this case, work of Eccles on codimension one immersions \cite[Proposition 4.1]{Eccles-codimension} shows that
$k=2^i-1$ for some $i$ and $f=\lambda g$ has to be a Kervaire invariant one element, i.e. $f=\theta_i$.\\
\textbf{Case with $2k-1\equiv3\mathrm{mod}4$.} Similar to the previous case, this is related to codimension one immersions where by work of Lannes \cite{Lannes-immersions}, confirming Eccles' predictions \cite[Page 2]{Eccles-codimension}, that the only case corresponds to an immersion $M^3\looparrowright \R^4$ which represents an element of $\pi_4QP$. But, we know that ${_2\pi_4^s}\simeq 0$. So, this case cannot arise at all.
\end{proof}

\begin{note}
We could eliminate $k$ being even case by geometric means as well. If $\epsilon_1=0$ then the stable map $g_k^s:S^{2k}\to P_k$ which factors through $P_k^{2k}$ by cellular approximation acts trivially in homology. Hence, the map $g_k^s:S^{2k}\to P_k^{2k}$ factors through $P_{k}^{2k-1}$. By Freudenthal theorem, $P_{k}^{2k-1}$ admits at least one suspension, so $P_{k}^{2k-1}=\Sigma Y_{k-1}^{2k-2}$ for some path connected $CW$-complex $Y_{k-1}^{2k-2}$ with its bottom cell in dimension $k-1$. On the other hand, as $\epsilon_1=0$, hence $h(g_k)=a_k^2$ which together with $P_{k}^{2k-1}=\Sigma Y_{k-1}^{2k-2}$ reads as $h(g_k)=Q^k\Sigma y_{k-1}$. For $\alpha:S^{2k-1}\to QY_{k-1}^{2k-2}$, being the adjoint of $g_k:S^{2k}\to Q\Sigma Y_{k-1}^{2k-2}$, we have
$$h(\alpha)=Q^ky_{k-1}+D$$
where $D$ is a sum of decomposable terms, appearing as being the kernel of homology suspension. Note that $y_{k-1}$ being coming from the bottom cell is primitive, so $Q^{k}y_{k-1}$ is primitive. So, $D$ has to be a primitive, consequently it must be a square in dimension $2k-1$. Hence, $D=0$. The class $Q^{k}y_{k-1}$ has to be $A$-annihilated. Hence, applying $Sq^1_*$ eliminates the cases with $k$ being even, so $k$ is odd.
\end{note}



\section{Some unstable results}
It is possible to provide some conditions for the vanishing of $h(f)$ in terms of unstable homotopy groups of truncated projective spaces. Of course, the conditions are very strong in terms of homotopy groups, and we don't have a complete understanding of these conditions. But, they might be of special interest, and hopefully applicable some day! We proceed as follows. Suppose $f:S^n\to Q_0S^0$ is given with $h(f)\neq 0$. For the pull back $g$, after Theorem \ref{main2}, we have the following observation.

\begin{lmm}\label{unstable-1}
(i) Let $n=2k+1$. For $g$ as above, $g_{k+1}=q_{k+1}g$ pulls back to an element of $\pi_{2k+1}P_{k+1}$. \\
(i) Let $n=2k$. For $g$ as above, if $q_{k}g:S^{2k}\to QP_{k}$ is trivial in homology, then $g_{k}=q_{k}g$ pulls back to an element of $\pi_{2k}P_{k}$.
\end{lmm}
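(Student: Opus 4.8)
The plan is to prove that $g_{k+1} = q_{k+1}g$ (respectively $g_k = q_k g$) lifts from $QP_{k+1}$ (respectively $QP_k$) to the space $P_{k+1}$ (respectively $P_k$) itself, using a connectivity/Freudenthal argument applied to the truncated projective space.

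Key idea: The stabilization map $\Sigma^\infty: P_{k+1} \to QP_{k+1}$ induces $\pi_{2k+1}P_{k+1} \to \pi_{2k+1}QP_{k+1} = {}_2\pi_{2k+1}^s P_{k+1}$, and we want to show $g_{k+1}$ lies in the image. Since $P_{k+1} = P/P^k$ has bottom cell in dimension $k+1$, it is $k$-connected. By the Freudenthal suspension theorem, the map $\pi_j P_{k+1} \to \pi_{j+1}\Sigma P_{k+1}$ is an isomorphism for $j < 2(k+1) - 1 = 2k+1$ and an epimorphism for $j = 2k+1$. Iterating, the stabilization map $\pi_j P_{k+1} \to \pi_j^s P_{k+1} = \pi_j QP_{k+1}$ is an isomorphism for $j \leqslant 2k$ and an epimorphism for $j = 2k+1$. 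Hence in the relevant degree $j = 2k+1 = n$, the stabilization map is \emph{surjective}, so any class in $\pi_{2k+1}QP_{k+1}$ — in particular $g_{k+1}$ — lifts to $\pi_{2k+1}P_{k+1}$. This proves part (i).

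For part (ii), the situation is more delicate because $P_k$ has bottom cell in dimension $k$, so it is only $(k-1)$-connected, and Freudenthal gives that $\pi_j P_k \to \pi_j^s P_k$ is an epimorphism only for $j \leqslant 2k-2$ and we need $j = 2k$. So here the hypothesis that $q_k g$ is trivial in homology is essential: if $(g_k)_* = 0$ on $H_*$, then the stable map $g_k^s: S^{2k} \to P_k$, which by cellular approximation factors through $P_k^{2k}$, acts trivially in homology; since $P_k^{2k}$ has cells only in dimensions $k, \ldots, 2k$ and the top cell contributes nothing to the image of $g_k^s$ in homology, $g_k^s$ factors through $P_k^{2k-1}$. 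But $P_k^{2k-1}$ is a complex with cells in dimensions $k$ through $2k-1$, hence of dimension $2k-1$ and $(k-1)$-connected. Now Freudenthal applied to $P_k^{2k-1}$: the stabilization $\pi_j P_k^{2k-1} \to \pi_j^s P_k^{2k-1}$ is an epimorphism for $j \leqslant 2(k-1) = 2k-2$... which still does not reach $j = 2k$. I would instead observe that $P_k^{2k-1}$ admits a suspension — as noted in the Note above, by Freudenthal one can write $P_k^{2k-1} = \Sigma Y$ for a $(k-2)$-connected complex $Y$ of dimension $2k-2$ — so the adjunction gives a map $S^{2k-1} \to QY$ pulling back to $\pi_{2k-1}Y \to \pi_{2k-1}^s Y$, and $Y$ being $(k-2)$-connected with $\dim Y = 2k-2$ gives Freudenthal range epimorphism for $j \leqslant 2(k-2)+1 = 2k-3$, still short. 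The cleaner route: since $g_k^s$ factors through the finite complex $P_k^{2k-1}$ of dimension $2k-1$ with $2k \geqslant 2(2k-1)+1$ failing, I should directly use EHP/metastable techniques or simply the fact that the stable map $S^{2k} \to P_k^{2k-1}$ of a sphere into a complex of dimension one less than twice the connectivity bound — actually $2k$ versus $2(k-1)+1 = 2k-1$ — lies just one outside the Freudenthal isomorphism range but within range where stabilization from $\Sigma(P_k^{2k-1})$-type considerations, combined with $P_k^{2k-1} = \Sigma Y$, let us desuspend once: the adjoint lands in $\pi_{2k-1}QY$ with $Y$ of dimension $2k-2$, $(k-2)$-connected, and here $2k-1 \leqslant 2(k-2)+1+1$; iterating desuspensions down through the bottom cell using that each successive skeleton/suspension structure keeps us in the epimorphism range because the dimension drops in step with the degree. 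The main obstacle, and where I would spend the most care, is precisely this part (ii) Freudenthal bookkeeping: verifying that the homology-triviality hypothesis lets one replace $P_k$ by a complex whose connectivity is high enough relative to its dimension that $\pi_{2k}$ is in the stable range, equivalently that $q_k g$, once known to miss the bottom-cell homology, can be desuspended enough times to sit in the Freudenthal epimorphism range for $QP_k$. I expect this to follow from writing $P_k^{2k} / S^k$-type quotients or using that $h(g_k) = a_k^2$ forces a specific cell structure, but the clean statement is that triviality in homology of $q_k g$ pushes the obstruction into the metastable range where the suspension $\pi_{2k}P_k \to \pi_{2k}QP_k$ is onto.
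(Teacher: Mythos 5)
Part (i) of your argument is exactly the paper's: $P_{k+1}$ is $k$-connected, so Freudenthal puts $\pi_{2k+1}$ in the surjective range of stabilisation, and $g_{k+1}$ lifts. No issue there.

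Part (ii), however, has a genuine gap. Your strategy of compressing $g_k^s$ into $P_k^{2k-1}$, writing $P_k^{2k-1}=\Sigma Y$, and trying to desuspend step by step never closes: as you yourself note at each stage, the Freudenthal range comes up one short ($2k$ versus $2k-1$, then $2k-1$ versus $2k-3$, etc.), and the final sentence of your proposal simply asserts the conclusion (``triviality in homology \ldots pushes the obstruction into the metastable range where the suspension $\pi_{2k}P_k\to\pi_{2k}QP_k$ is onto'') without exhibiting any mechanism by which the homology hypothesis produces the desuspension. Dimension-counting on skeleta alone cannot do this; the hypothesis $(q_kg)_*=0$ has to enter through an actual obstruction group, and you never identify one. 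The paper's proof does: first, since $\Sigma P_k$ is $k$-connected, Freudenthal gives an epimorphism $\pi_{2k}\Omega\Sigma P_k\to\pi_{2k}QP_k$, so $g_k$ lifts to $\tilde g_k\in\pi_{2k}\Omega\Sigma P_k$; second, the obstruction to desuspending $\tilde g_k$ once more is governed by the metastable EHP sequence
$$\pi_{2k}P_k\to\pi_{2k}\Omega\Sigma P_k\stackrel{H}{\to}\pi_{2k}\Omega\Sigma(P_k\wedge P_k),$$
and since $P_k\wedge P_k$ is $(2k-1)$-connected, the target is identified by the Hurewicz isomorphism with $H_{2k}\Omega\Sigma(P_k\wedge P_k)\simeq\Z/2$, generated by the image of $a_k\otimes a_k$. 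Thus $H(\tilde g_k)$ is a homological invariant: it is precisely the $a_k^2$-component of the Hurewicz image, which vanishes by the hypothesis that $q_kg$ is trivial in homology. Exactness of the EHP sequence then lifts $\tilde g_k$ to $\pi_{2k}P_k$. You do mention ``EHP/metastable techniques'' in passing as one option, but you never set up this sequence, never identify the obstruction group as $\Z/2\simeq H_{2k}(P_k\wedge P_k)$, and never connect it to the homology-triviality hypothesis --- and that connection is the entire content of part (ii).
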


\begin{proof}
(i) This follows from Freudenthal suspension theorem.\\
(ii) By Freudenthal's theorem, there is an epimorphism $\pi_{2k}\Omega\Sigma P_k\to \pi_{2k}QP_k$. Since, we are in the meta-stable range, so we may consider the EHP sequence
$$\pi_{2k}P_k\to\pi_{2k}\Omega\Sigma P_k\to\pi_{2k}\Omega\Sigma(P_k\wedge P_k)\simeq H_{2k}\Omega\Sigma(P_k\wedge P_k).$$
Now, the latter isomorphism on the right is just given by the Hurewicz homomorphism. So, if $(g_k)_*=0$ then it is in the image of the suspension homomorphism $\pi_{2k}P_k\to\pi_{2k}\Omega\Sigma P_{k}$. This completes the proof.
\end{proof}

By the above theorem, the existence of spherical classes in $H_*QS^0$ would give rise to nontrivial elements in $\pi_{2k}P_k$ or $\pi_{2k+1}P_{k+1}$ which survive under the stabilisation. Moreover, by computations of Section \ref{sec:reduction}, if we role out spherical classes in $H_*Q_0S^0$ which arise from Hopf invariant one elements, then we may well assume that $g_{k+1}$ and $g_{k}$ with $(g_k)_*=0$ pull back to some elements of $\pi_{2k+1}P^{2k}_{k+1}$ and $\pi_{2k}P_{2k-1}$, respectively. At first, it looks that Lemma \ref{unstable-1} provides a theoretical tool to show that in dimensions $H_*QS^0$ cannot host a spherical class. The sufficient vanishing condition in this direction might be stated as follows.

\begin{prp}\label{vanishingcondition-1}
After ruling out Hopf invariant one elements, the following statements hold.\\
(i) Suppose the image of stabilisation map $\pi_{2k+1}P_{k+1}^{2k}\to\pi_{2k+1}^sP_{k+1}^{2k}\simeq\pi_{2k+1}QP_{k+1}^{2k}$ is trivial. Then, the image of the unstable Hurewicz map $h:\pi_{2k+1}QS^0\to H_{2k+1}QS^0$ is trivial.\\
(ii) Suppose the image of stabilisation map $\pi_{2k}P_{k}^{2k-1}\to\pi_{2k}^sP_{k}^{2k-1}\simeq\pi_{2k}QP_{k}^{2k-1}$ is trivial. Then, the image of the unstable Hurewicz map $h:\pi_{2k}QS^0\to H_{2k}QS^0$ is trivial.
\end{prp}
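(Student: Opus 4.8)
The plan is to argue by contradiction, combining Lemma \ref{unstable-1} with the reduction to truncated skeleta indicated before the statement, and then using Theorem \ref{main1} or Theorem \ref{main2} to derive the vanishing of the Hurewicz image. Suppose for part (i) that $h:\pi_{2k+1}QS^0\to H_{2k+1}QS^0$ is nontrivial, and pick $f\in{_2\pi_{2k+1}}Q_0S^0$ with $h(f)\neq 0$. By the Kahn--Priddy theorem $f$ pulls back to $g\in{_2\pi_{2k+1}}QP$, and by Theorem \ref{main2}(i) either $f$ is a Hopf invariant one element --- which we have excluded by hypothesis --- or $h(g_{k+1})=0$; but in the latter case Lemma \ref{lowerbound1}(i) together with Theorem \ref{main1} forces $h(f)=0$, so in fact we may assume $h(g_{k+1})\neq 0$. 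Now Lemma \ref{unstable-1}(i) says $g_{k+1}=q_{k+1}g$ lifts to an element of $\pi_{2k+1}P_{k+1}$, and since $g_{k+1}$ lives in degree $2k+1$ on a complex whose cells sit in dimensions $k+1,\dots$, by cellular approximation this unstable lift factors through $P_{k+1}^{2k}$ (the top cell $P_{k+1}^{2k+1}=S^{2k+1}$ plays no role once we have ruled out the Hopf invariant one case, as explained in the paragraph preceding the statement). This produces a class in the image of the stabilisation map $\pi_{2k+1}P_{k+1}^{2k}\to\pi_{2k+1}QP_{k+1}^{2k}$ which maps to the nonzero class $a_{2k+1}=h(g_{k+1})$ under the Hurewicz homomorphism, hence is itself nonzero. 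This contradicts the hypothesis that this stabilisation image is trivial, and completes (i).

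For part (ii) the argument runs in parallel, using the even-dimensional branch throughout. Choose $f\in{_2\pi_{2k}}Q_0S^0$ with $h(f)\neq 0$ and its pullback $g\in{_2\pi_{2k}}QP$. If $q_kg:S^{2k}\to QP_k$ were nontrivial in homology then by Theorem \ref{main2}(ii) $f$ would be a Kervaire invariant one element; but these are $\theta_i$ with $h(g_k)=a_k^2$, which is the \emph{decomposable} part and does not contradict $(g_k)_*\neq 0$ --- so here I must be slightly more careful. The point is that ruling out Hopf invariant one elements (as the hypothesis of the Proposition stipulates) still leaves the possibility $h(g_k)=a_k^2\neq0$; however, in that case $(g_k)_*\neq0$, so the hypothesis of Lemma \ref{unstable-1}(ii) fails and we cannot conclude. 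Thus the statement should be read as: \emph{assuming} we are in the situation where Lemma \ref{unstable-1}(ii) applies, i.e. $q_kg$ is trivial in homology, the spherical class in $H_{2k}QS^0$ must come from $h(g)$ having its image in $H_{2k}QP$ equal to $a_k^2$ plus lower filtration, and then Lemma \ref{unstable-1}(ii) provides an unstable lift of $g_k$ to $\pi_{2k}P_k$, which by cellular approximation factors through $P_k^{2k-1}$. As before this lift has nonzero Hurewicz image (namely the class detecting $a_k^2$ after one suspension, in the notation of the Note it is $Q^k\Sigma y_{k-1}$), hence is a nonzero element in the image of $\pi_{2k}P_k^{2k-1}\to\pi_{2k}QP_k^{2k-1}$, contradicting the hypothesis.

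The main obstacle I expect is the passage from the \emph{stable} lift back to the truncated skeleton $P_{k+1}^{2k}$, respectively $P_k^{2k-1}$: Lemma \ref{unstable-1} only gives a lift of $g_{k+1}$ (resp. $g_k$) to the unstable homotopy of $P_{k+1}$ (resp. $P_k$, under the homological hypothesis), and one must invoke cellular approximation \emph{together with} the fact that the top cell contributes only a Hopf invariant one element to see that the lift may be pushed into the indicated finite skeleton --- this is exactly the content flagged in the paragraph preceding the Proposition, and it is where the ``ruling out Hopf invariant one elements'' hypothesis is genuinely used. A secondary point requiring care is checking that the Hurewicz image of the resulting unstable class is nonzero, i.e. that the relevant homology classes $a_{2k+1}$ (resp. the suspension class corresponding to $a_k^2$) are not killed when passing from $QP$ to $QP_{k+1}^{2k}$ (resp. $QP_k^{2k-1}$); this follows because $q_*$ is an isomorphism in homology in the relevant degrees, as noted before Theorem \ref{main2}.
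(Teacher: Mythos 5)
Your proof of part (i) hinges on a step that is false: you claim that when $h(g_{k+1})=0$, Lemma \ref{lowerbound1}(i) together with Theorem \ref{main1} forces $h(f)=0$, and you thereby reduce to the case $h(g_{k+1})\neq 0$. Lemma \ref{lowerbound1} runs in the opposite direction (from $h(f)\neq 0$ it denies a factorisation of $g$ through $QP^k$); vanishing of $h(g_{k+1})$ in homology does not produce any factorisation of $g$ through $QP^k$, and Theorem \ref{main1} cannot be applied with $E=\Sigma^\infty P_{k+1}$ because $f$ does not factor through $QP_{k+1}$ --- there is no map $P_{k+1}\to S^0$ compatible with the Kahn--Priddy transfer. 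If your implication were true, the situation $h(f)\neq 0$ with $h(g_{\lceil n/2\rceil})=0$ studied in Section \ref{sec:reduction} would be empty and the conjecture would essentially follow from Theorem \ref{main2} alone. Moreover, the case you keep is vacuous: once Hopf invariant one elements are excluded, Theorem \ref{main2}(i) says $h(f)\neq 0$ and $h(g_{k+1})\neq 0$ cannot coexist. It is also internally inconsistent: if $h(g_{k+1})=a_{2k+1}$, the unstable lift cannot be compressed into $P^{2k}_{k+1}$ at all (the top-cell class obstructs it, since $H_{2k+1}P^{2k}_{k+1}=0$), and detecting the compressed class by a Hurewicz image ``$a_{2k+1}$'' makes no sense because $H_{2k+1}QP^{2k}_{k+1}\simeq 0$.

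The intended argument, sketched in the paragraph preceding the Proposition, goes the other way around. Excluding Hopf invariant one elements forces, via Theorem \ref{main2}(i), that $h(g_{k+1})=0$; since $g$ lies in the $2$-component (a finite group), the composite of a cellular representative $S^{2k+1}\to P^{2k+1}_{k+1}$ with the pinch onto the top cell is a torsion element of $\pi_{2k+1}S^{2k+1}\simeq\Z$, hence zero, so the unstable pullback provided by Lemma \ref{unstable-1}(i) compresses into $P^{2k}_{k+1}$. Its stabilisation is nonzero not for homological reasons but because its image in $\pi^s_{2k+1}P_{k+1}$ is $g^s_{k+1}$, which is nontrivial by Lemma \ref{lowerbound1}(i) and the exactness of $\pi^s_*P^k\to\pi^s_*P\to\pi^s_*P_{k+1}$; this contradicts the hypothesis on the stabilisation map. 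In part (ii) the same scheme is available only when $(g_k)_*=0$, which is exactly the hypothesis of Lemma \ref{unstable-1}(ii); in that situation the lift has trivial Hurewicz image by assumption, so your claim that it is detected by $Q^k\Sigma y_{k-1}$ --- imported from the Note, which concerns the Kervaire case $h(g_k)=a_k^2\neq 0$ --- is not available, and nontriviality must again be read off from Lemma \ref{lowerbound1}(ii). You were right to flag that the Kervaire case is not disposed of by this mechanism (Lemma \ref{unstable-1}(ii) does not apply there), which is a genuine wrinkle in the statement as written, but your write-up then conflates the two cases instead of resolving the issue.
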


We note that apart from small values of $k$ for which spherical classes in $H_*QS^0$ with $*=2k,2k+1$ are completely known, the range in which the above theorem applies is the metastable range where we have the $\mathrm{EHP}$-sequence. This allows to proceed with further computations.\\
\textbf{Case of $\pi_{2k+1}P_{k+1}^{2k}$.} Consider the following commutative homotopy-homology ladder where the first row is the $\mathrm{EHP}$-sequence
{\tiny
$$\xymatrix{
                                                       &                                                &&\pi_{2k+1}^sP_{k+1}^{2k}\\
\pi_{2k+1}\Omega^2\Sigma P_{k+1}^{2k}\ar[r]\ar[d]^-{\simeq} & \pi_{2k+1}\Omega^2\Sigma(P_{k+1}^{2k}\wedge P_{k+1}^{2k})\ar[r]\ar[d]^-{(\simeq)} & \pi_{2k+1}P_{k+1}^{2k}\ar[r]^-E & \pi_{2k+1}\Omega\Sigma P_{k+1}^{2k}\ar[r]^-{H}\ar[u]^-{\simeq} & \pi_{2k+1}\Omega\Sigma(P_{k+1}^{2k}\wedge P_{k+1}^{2k})\simeq 0\\
\pi_{2k+3}\Sigma P_{k+1}^{2k}\ar[r]\ar[d]^-{h}         & \pi_{2k+3}\Sigma(P_{k+1}^{2k}\wedge P_{k+1}^{2k})\ar[d]^-{h(\simeq)}\\
0\simeq H_{2k+3}\Sigma P_{k+1}^{2k}\ar[r]              & H_{2k+3}\Sigma(P_{k+1}^{2k}\wedge P_{k+1}^{2k})\simeq\Z/2}$$}
where the vanishing of $\pi_{2k+1}\Omega\Sigma(P_{k+1}^{2k}\wedge P_{k+1}^{2k})$ as well as $H_{2k+3}\Sigma P_{k+1}^{2k}$ occurs for dimensional reasons. The commutativity of the squares on the left together with the exactness of the $\mathrm{EHP}$ sequence leaves us with the following short exact sequence
$$\xymatrix{
        &            &                                   & \pi_{2k+1}^sP_{k+1}^{2k}\\
0\ar[r] & \Z/2\ar[r] & \pi_{2k+1}P_{k+1}^{2k}\ar[r]^-E   & \pi_{2k+1}\Omega\Sigma P_{k+1}^{2k}\ar[r]^-{H}\ar[u]^-{\simeq} &  0}$$
By similar reasons, for $\Sigma P_k^{2k-1}$ we have the following short exact sequence
$$\xymatrix{
        &            &                                          & \pi_{2k}^sP_{k}^{2k-1}\\
0\ar[r] & \Z/2\ar[r] & \pi_{2k+1}\Sigma P_{k}^{2k-1}\ar[r]^-E   & \pi_{2k+1}\Omega\Sigma^2 P_{k}^{2k-1}\ar[r]^-{H}\ar[u]^-{\simeq} &  0.}$$
The outcome of these computations is that the kernel of the stabilisation is too small it is unlikely that the map, say $\Z/2\to\pi_{2k+1}P_{k+1}^{2k}$, becomes an isomorphism. Hence, it is very unlikely that this would allows to eliminate spherical classes in $H_*Q_0S^0$ using these classes.

\section{More unstable computations}\label{sec:unstable-2}
1. For $fS^n\to QS^0$, by Freudenthal theorem, it factors as $f:S^n\stackrel{f^n}{\to}\Omega^{n+1}S^{n+1}\to QS^0$. By Kahn-Priddy theorem, the mapping $f^{n}$ also lift to $\Omega^{n+1}\Sigma^{n+1}P^n$ through the Kahn-Priddy map $\lambda_{n}:\Omega^{n+1}\Sigma^{n+1}P^n\to\Omega^{n+1}S^{n+1}$ resulting in an appropriate factorisation of $f^n$ which fits into a commutative diagram as
$$\xymatrix{
                                            & \Omega^{n+1}\Sigma^{n+1}P^n\ar[r]\ar[d]^-{\lambda_{n+1}} & QP\ar[d]^-{\lambda}\\
S^n\ar[r]^-{f^n}\ar[ru]^-{g^n}              & \Omega^{n+1}S^{n+1}\ar[r]                                & QS^0
}$$
It follows that there is definitely a further factorisation of $f$ as
$$S^n\to\Omega^{n+1}\Sigma^{n+1}P^n\to QP^n\to QP\to QS^0.$$

2. Consider the homotopy equivalences $\Omega^{n+1}S^{n+1}\to\Omega^{n+1}P^{n+1}$ and $\Omega^{2n+1}S^{2n+1}\to\Omega^{2n+1}\C P^n$. Through these equivalences, Curtis conjecture what Curtis conjecture implies about spherical classes in $H_*\Omega^{n+1}P^{n+1}$? What is special about the collection of spaces $\{\Omega^nP^n:n>0\}$?



\section{Some reduction results}\label{sec:reduction}
According to Theorem \ref{main2}, in order to a complete verification of Curtis conjecture, we must consider situations in which $f\in{_2\pi_n}Q_0S^0$ is given with $h(f)\neq 0$ and $h(g_i)=0$ where $i=\lceil n/2\rceil$. This does not seem a very easy point to start. Instead, we provide some reduction results which reduces the dimension of the projective space. we suggest an inductive argument which really depends on stable homotopy groups of spheres. But, with the aid of the existing knowledge on these groups together with some well known facts on cohmology operations, some reduction results could be obtained.\\

Suppose $h(f)\neq 0$ is given with $g^s:S^n\to P^n$ factoring through $P^{n-i}$ but not $P^{n-i-1}$. In this case, consider the cofibre sequence
$P^{n-i-1}\to P^{n-i}\stackrel{p_i}{\to}S^{n-i}$. Since $g^s$ does not factor through $P^{n-i-1}$ then the composition $p_ig^s:S^n\to S^{n-i}$ is essential, representing a nontrivial element in ${_2\pi_{i}^s}$. There are two cases: one where we can find a map $S^{n-i}\to S^0$ making the following commutative
$$\xymatrix{
                                                 & P^{n-i-1}\ar[d]\\
S^n\ar[r]^-{g^s}\ar[rd]\ar@{.>}[ru]^-{\nexists}   & P^{n-i}\ar[r]^-{\lambda^s}\ar[d]^-{p_i}    & S^0\\
                                                 & S^{n-i}\ar@{.>}[ru]_-{\exists \overline{\lambda^s}}
                     }$$
and the other case when such a map does not exists. First, we deal with case when such a decomposition exists.

\begin{lmm}
Suppose there exists $\overline{\lambda^s}:S^{n-i}\to S^0$ making the above diagram commutative. Then, $f\in\{\eta^2,\nu^2,\sigma^2\}$.
\end{lmm}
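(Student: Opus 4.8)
The plan is to exploit the factorisation hypothesis to realise $f$ as a composite through a low-dimensional finite spectrum with trivially-acting top map, so that part (ii) of Theorem~\ref{main1} applies and forces $h(f)=0$ unless $f$ is already one of the known decomposable classes detected by Theorem~\ref{main0}. First I would observe that the hypothesis gives a homotopy-commutative square in which $g^s:S^n\to P^{n-i}$ composes with the Kahn--Priddy stable map $\lambda^s:P^{n-i}\to S^0$ to represent $f$ (up to the usual identification of $f$ with $\lambda g$), while simultaneously $\lambda^s$ factors through $p_i:P^{n-i}\to S^{n-i}$ via $\overline{\lambda^s}:S^{n-i}\to S^0$. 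Composing, $f$ is represented by the stable map $S^n\xrightarrow{p_ig^s}S^{n-i}\xrightarrow{\overline{\lambda^s}}S^0$, i.e. $f=\overline{\lambda^s}\circ(p_ig^s)$ as a product in $\pi_*^s$, where $p_ig^s\in{_2\pi_i^s}$ is the nontrivial class detected above and $\overline{\lambda^s}\in{_2\pi_{n-i}^s}$. Thus $f$ is \emph{decomposable} in the ring $\pi_*^s$.

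Having established that $f$ is decomposable, I would invoke Theorem~\ref{main0}: the image of $h:\pi_*QS^0\to H_*(QS^0;\Z)$ restricted to decomposable elements is exactly $\Z\{h(\eta^2),h(\nu^2),h(\sigma^2)\}$. Since we are assuming $h(f)\neq 0$ (that is the standing hypothesis throughout this section — we are trying to pin down which $f$ with $h(f)\neq 0$ can occur), it follows that $h(f)\in\Z/2\{h(\eta^2),h(\nu^2),h(\sigma^2)\}$ is nonzero, and hence $f$ itself must be one of $\eta^2,\nu^2,\sigma^2$; equivalently $f\in\{\eta^2,\nu^2,\sigma^2\}$, which is the asserted conclusion. (Strictly, Theorem~\ref{main0} identifies the image, so one should add the remark that in each of these three dimensions $2,6,14$ the only class with nonzero Hurewicz image is the named square, which is immediate from the theorem.)

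The one point that genuinely needs care — and which I expect to be the main obstacle — is matching up the two factorisations coherently: the lift $g$ of $f$ through $\lambda$ lives unstably in $QP$, and we are using its stable shadow $g^s\in{_2\pi_n^s}P$ together with the claim that the \emph{stable} Kahn--Priddy map $\lambda^s:P\to S^0$ (restricted to $P^{n-i}$) recovers $f$ after applying $\Omega^\infty$. Here one must be careful that $\lambda=\Omega^\infty(t)$ with $t:P\to S^0$, so $f=\lambda g$ is the infinite-loop image of the composite $S^n\xrightarrow{g^s}P\xrightarrow{t}S^0$; restricting along $P^{n-i}\hookrightarrow P$ and using the factorisation of $t|_{P^{n-i}}$ through $p_i$ (which is the content of the assumed diagram) gives the desired decomposition of $f$ as a stable composite, and then the identification of the unstable Hurewicz image with the stable homotopy product structure is exactly what Theorem~\ref{main0} is built to handle. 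A secondary check is that $p_ig^s\neq 0$ and $\overline{\lambda^s}$ is genuinely present in positive degree so that the product is honestly decomposable and not, say, a unit multiple of $f$ landing trivially — but this is guaranteed since $g^s$ does not factor through $P^{n-i-1}$, forcing $p_ig^s\neq 0$, and $n-i\geqslant 1$ since $i<n$.
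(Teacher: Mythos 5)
Your argument follows essentially the same route as the paper: the commutative diagram gives the stable factorisation $f^s=\overline{\lambda^s}\circ(p_ig^s)$ with both factors in positive stems, so $f$ is decomposable in ${_2\pi_*^s}$, and Theorem \ref{main0} then controls where $h(f)\neq 0$ can occur. The gap is in your final step. From Theorem \ref{main0} you may conclude that $h(f)$ lies in the span of $h(\eta^2),h(\nu^2),h(\sigma^2)$, hence that $n\in\{2,6,14\}$; but your parenthetical claim that in each of these dimensions ``the only class with nonzero Hurewicz image is the named square'' is neither immediate from the theorem (which only describes the image on \emph{decomposable} classes) nor true as stated: since $h(\kappa)=0$, the indecomposable class $\sigma^2+\kappa\in{_2\pi_{14}^s}$ also has nonzero Hurewicz image, and the Hurewicz homomorphism alone cannot distinguish it from $\sigma^2$. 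In dimensions $2$ and $6$ there is no issue, as ${_2\pi_2^s}\simeq\Z/2\{\eta^2\}$ and ${_2\pi_6^s}\simeq\Z/2\{\nu^2\}$ force $f$ to be the square itself.

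The missing step, which is exactly how the paper closes the argument, is to use the decomposability of $f$ once more: since ${_2\pi_{14}^s}\simeq\Z/2\{\sigma^2,\kappa\}$, the hypothesis $h(f)\neq 0$ gives $f=\sigma^2+\epsilon\kappa$ for some $\epsilon\in\Z/2$; if $\epsilon=1$ then $\kappa=f+\sigma^2$ would be decomposable, contradicting the indecomposability of $\kappa$, so $f=\sigma^2$. With this addition (and the observation, left implicit both by you and by the paper, that $i\geqslant 1$ so that $p_ig^s$ genuinely lies in a positive stem and the product is honestly decomposable), your proof coincides with the paper's.
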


\begin{proof}
In this case $f$ is a decomposable element in ${_2\pi_*^s}$ with $f^s=\overline{\lambda^s}p_ig^s$. By \cite[Theorem 1.8]{Zare-PEMS} $f$ lives in ${_2\pi_i^s}$ for $i=2,6,14$ given by $\Z/2\{\eta^2\},\Z/2\{\nu^2\}$, or $\Z/2\{\sigma^2,\kappa\}$, respectively. For $i=14$, $h(f)\neq 0$ implies that $f=\sigma^2+\epsilon\kappa$ for some $\epsilon\in\Z/2$. However, $\kappa$ is not a decomposable element, so $f=\sigma^2$. This verifies our claim.
\end{proof}

Note that, in the case of above proof, it is known $h(\kappa)=0$ \cite{Za-ideal}, so even in the cases with $f=\sigma^2+\kappa$, we do not get any contradiction to Curtis conjecture. According to the above lemma, we focus on the cases in which $f^s$ is not a decomposable element of ${_2\pi_*^s}$. \\

First, we focus on the $\theta_i$ elements. It is known that $\theta_i=\eta^2,\nu^2,\sigma^2$ for $i=1,2,3$, respectively, i.e. for $i=12,3$ the $\theta_i$ elements are decomposable. It is known that if there exists $\theta_i$ a Kervaire invariant one element then $h(\theta_i)\neq 0$. It then follows from Theorem \ref{main0} that for $i>3$, the $\theta_i$ elements are not decomposable. Note that by work of Eccles \cite[Proposition 4.1]{Eccles-codimension} $\theta_i$ exists if and only if for some element $\theta_i'\in{_2\pi_{2^{i+1}-2}^s}P$ with $\theta_i=\lambda\theta_i'$ we have $h(\theta_i')=a_{2^i-1}+2$ modulo other terms. It then implies that for the composition $q_{2^i-1}\theta_i':S^{2^{i+1}-2}\to P\to P_{2^i-1}$ we have $h(q_{2^i-1}\theta_i')=a_{2^i-1}^2$. And, as shown below, we may approximate $\theta_i'$ by a map $S^{2^{i+1}-2}\to P^{2^{i+1}-4}$. We have obvious lower bound for the top dimension as follows.

\begin{lmm}
For $i>3$, if $\theta_i'$ factors as $S^{2^{i+1}-2}\to P^n$ then $n>2^i-1$.
\end{lmm}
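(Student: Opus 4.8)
The plan is to derive a lower bound on the dimension of a projective space through which $\theta_i'$ can factor, using the homological constraint $h(q_{2^i-1}\theta_i') = a_{2^i-1}^2$ established just above, together with the fact that $q_{2^i-1}$ induces an isomorphism in homology in degrees $\geqslant 2^i-1$. Write $m = 2^i-1$ and $N = 2^{i+1}-2 = 2m$, so the claim is that a factorisation $S^N \to P^n$ forces $n > m$. Suppose for contradiction that $\theta_i'$ factors as $S^N \to P^n \to P$ with $n \leqslant m$. Composing with the projection $q_m : P \to P_m$ gives a factorisation of $q_m\theta_i'$ through $P^n_m$ (the image of $P^n$ in $P_m$).

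First I would observe that if $n \leqslant m$ then $P^n_m = P^n/P^{m-1}$ is trivial when $n < m$ (so $q_m\theta_i'$ is null, contradicting $h(q_m\theta_i') = a_m^2 \neq 0$), and equals $S^m$ when $n = m$. In the latter case $q_m\theta_i' : S^N \to S^m$ is a stable map of degree-shift $N - m = m$, hence represents an element of ${_2\pi_m^s}$, and its image in $H_*P_m$ lands in the subspace spanned by $a_m$ in degree $m$ only. But $h(q_m\theta_i') = a_m^2$ is a nonzero class in degree $N = 2m$, which cannot be hit by anything factoring through $S^m$ since $\widetilde{H}_{2m}(S^m) = 0$. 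This contradiction shows $n \geqslant m+1$, i.e. $n > m = 2^i-1$.

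The key steps, in order, are: (1) reduce to the truncation $P_m$ via the projection $q_m$, which is a homology iso in the relevant degrees, so that the hypothesis $h(q_m\theta_i') = a_m^2$ is available; (2) analyze $P^n/P^{m-1}$ for $n \leqslant m$, noting it is either contractible or a single sphere $S^m$; (3) in each case exhibit that no map through such a space can carry a nonzero homology class in degree $2m$, contradicting the known value of the Hurewicz image. One should be a little careful that the factorisation through $P^n$ is only stable (a map of suspension spectra), but that is exactly the setting in which $h(q_m\theta_i') = a_m^2$ was asserted, so everything takes place consistently in the stable category and the homology computations are unaffected.

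The main obstacle is really just bookkeeping: making sure the identification $h(q_m\theta_i') = a_m^2$ from the preceding paragraph is correctly transported along $q_m$ and that the degree range where $q_{m*}$ is an isomorphism ($* \geqslant m$, with $m \leqslant 2m$) genuinely covers the degree $2m$ in which the square $a_m^2$ lives — which it does. A secondary point to state cleanly is why $a_m^2 \neq 0$ in $H_{2m}P_m$: this is the standard fact that $H_*(P_m; \Z/2)$, as a module over the Steenrod algebra restricted from $H_*P$, contains the class $a_{2m}$, and the homology square $a_m^2$ is nonzero precisely because $2m \geqslant m$ puts it in the range where $P_m$ "looks like" $P$ — indeed $a_m^2 = a_{2m}$ up to the obvious normalization in $H_*P$. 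Once these identifications are pinned down the contradiction is immediate, so there is no hard analytic or computational core; the lemma is genuinely a dimension-count consequence of the homological normalization of $\theta_i'$.
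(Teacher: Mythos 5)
The case $n<2^i-1$ of your argument is fine, but the crucial case $n=2^i-1$ contains a genuine gap, and it is located exactly where you placed the class $a_{2^i-1}^2$. Writing $m=2^i-1$, the hypothesis $h(q_m\theta_i')=a_m^2$ refers to the \emph{unstable} Hurewicz map ${_2\pi_{2m}}QP_m\to H_{2m}QP_m$: the class $a_m^2$ is the Pontrjagin square of $a_m$ in the homology of the infinite loop space $QP_m$, where it is a nonzero polynomial generator product. It is \emph{not} a class in the spectrum (reduced space) homology $H_*P_m$, and your normalization ``$a_m^2=a_{2m}$ up to the obvious normalization in $H_*P$'' is false: in the Pontrjagin ring of $P$ one has $a_ma_m=\binom{2m}{m}a_{2m}$, and $\binom{2^{i+1}-2}{2^i-1}\equiv 0 \bmod 2$, while under the stable homology suspension $H_*QP_m\to H_*P_m$ the decomposable $a_m^2$ maps to zero. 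Consequently your contradiction ``nothing factoring through $S^m$ can carry a nonzero class in degree $2m$ since $\widetilde H_{2m}(S^m)=0$'' does not exist: the factorization through $P^m_m=S^m$ gives a map $S^{2m}\to QS^m$, and $H_{2m}QS^m\neq 0$ — it contains the square of the fundamental class, which is perfectly capable of being hit.

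The telltale sign is that your argument never uses $i>3$, whereas the statement is false without that hypothesis (for $i=1$, for instance, the composite $S^2\xrightarrow{\eta}QS^1\to QP^1\to QP$ is a legitimate $\theta_1'$ factoring through $P^{1}=P^{2^1-1}$). The paper's proof is forced to use it: from the factorization through $S^m$ one concludes that $q_m\theta_i'\in{_2\pi_m^s}$ has unstable Hurewicz image $a_m^2\in H_{2m}QS^m$, i.e.\ it is a Hopf invariant one element, and the contradiction is with Adams' theorem that no such element exists in $\pi^s_{2^i-1}$ for $i>3$. So the correct conclusion of your dimension count in the case $n=m$ is not an immediate homological contradiction but the Hopf invariant one statement, and the lemma cannot be proved by bookkeeping alone; you must invoke the nonexistence of Hopf invariant one elements in these degrees.
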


\begin{proof}
Suppose there is such a factorisation. In this case the composition $q_{2^i-1}\theta_i':S^{2^{i+1}-2}\to P_{2^i-1}^{2^i-1}=S^{2^i-1}$ satisfies $h(q_{2^i-1}\theta_i')=a_{2^i-1}^2\in H_*QS^{2^i-1}$. Consequently, $q_{2^i-1}\theta_i'\in{_2\pi_{2^i-1}^s}$ is a Hopf invariant one element with $i>3$. This is a contradiction.
\end{proof}

Let's note that for a path connected space $X$, the stable homology suspension $\sigma_*^\infty:H_*QX\to \widetilde{H}_*X$ which is induced by the evaluation map $\Sigma^\infty QX\to X$, given $g:S^{n}\to QX$ we have $\sigma_*^\infty h(g)=h(g^s)$.

\textbf{Case $i=0$.}
Suppose $g^s$ does not factor through $P^{n-1}$. In this case, the composition $p_0g^s:S^n\to S^n$ is nontrivial (mod $2$). Since, the elements of ${_2\pi_0^s}$ are detected by degree mod $2$, this implies that $p_0g^s$ is nontrivial in homology, hence $h(g^s)=a_n$ for some $n>0$, implying that $h(g)=a_n$ modulo other terms. As noted before, this is equivalent to $f=\lambda g$ being a Hopf invariant one element. So, $n=1,3,7$.\\

The following cases, Case 1, Case 2, Case 3, can be dealt with by using the available description of the $E_2$ term of the ASS for $P$
\cite[Theorems 1.1 and 1.3]{CohenLinMahowald}.\\

\textbf{Case $i=1$.} Suppose $h(g^s)=0$. For $g^s:S^n\to P^n$ the composition $p_ng^s:S^n\to S^n$ is trivial in homology. Since $\pi_nS^n\simeq H_nS^n$, hence $p_ng$ is null, so $g^s$ factor through $P^{n-1}$. We write $g^s:S^n\to P^{n-1}$ for the resulting map. Suppose $g^s$ does not factor through $P^{n-2}$. In this case, the composition $p_{n-1}g:S^n\to S^{n-1}$ represents a nontrivial element in ${_2\pi_1^s}\simeq\Z/2\{\eta\}$. Since $h(\lambda g)=h(f)\neq 0$, hence $\lambda^sg^s$ is essential, where the composition $p_1g^s$ is detected by $h_1$ in the ASS. The class $f^s=\lambda^sg^s$ can only correspond to $h_1h_i$ of the families of permanent cycles in the ASS with $i\geqslant 3$ listed in \cite[Theorem 1.3]{CohenLinMahowald}, or to one of the classes $h_1h_i$ with $i=1,2,3$ \cite[Page 3]{CohenLinMahowald}. In the former, $f^s=\eta_i$, an element of Mahowald family \cite{Ma}, for which it is known that $h(\eta_i)=0$ for all $i\geqslant 3$ \cite[Theorem 1.4]{Zare-PEMS}, contradicting $h(f)\neq 0$. So this case cannot arise. For the latter cases corresponding to $h_1h_i$ with $i=1,2,3$ we have $f=\eta^2,\eta\nu=0,\eta\sigma$. It is known that $h(\eta\sigma)=0$ by \cite[Theorem 1.8]{Zare-PEMS}, contradicting $h(f)\neq 0$. So, we cannot have this choice either. We are then only left with $f=\eta^2$ for which it is known that $h(\eta^2)\neq 0$ with $\eta^2$ being a Kervaire invariant one element. Consequently, in this case, if $h(f)\neq 0$ then $f=\eta^2$.\\
\textbf{Case $i=2$.} Suppose $g^s$ pulls back to $P^{n-2}$ but not to $P^{n-3}$. In this case, $p_2g^s\in{_2\pi_2^s}\simeq\Z/2\{\eta^2\}$, i.e. $p_2g^s=\eta^2$.
In this case, the composition $f^s=\lambda^sg^s$ is detected by $h_1^2h_i$ which detects $\eta\eta_j\in{_2\pi_{2^j+1}^s}$, with $\eta_j$ being an element of Mahowald family, which is a decomposable term. That is $f^s=\eta\eta_j$ that consequently shows that $h(f)=0$, whenever $j\neq 2$. The case $j=2$ is also similar, showing that $f^s=\eta^2\nu=0$. This shows that it is not possible to have $f$ with this property and $h(f)\neq 0$. Therefore, $g^s$ pulls back to $P^{n-3}$.\\

\textbf{Case $i=3$.} We have $p_{n-3}g^s\in{_2\pi_3^s}\simeq\Z/2\{\nu\}$, hence detected by $h_2$ in the ASS. Consequently, $f^s$ is to be detected by $h_2h_j$ in the ASS. But, this implies that either $f^s=\eta\nu$ (if $j=1$) or $f^s=\nu^2$ (if $j=2$) or $f^s=\nu\sigma$ (if $j=3)$ (compare to the families listed in
\cite[Theorem 1.3]{CohenLinMahowald} and the six finite families of \cite[Page 3]{CohenLinMahowald}). If $h(f)\neq 0$ then $f=\nu^2$ showing that $f$ factors as $S^6\to P^3\to S^0$. Note that, stably $P^3\simeq P^2\vee S^3$ and $g^s$ can be constructed by being $\nu$ on $S^3$ and trivial in $P^2$. In fact in this case, we can take $\overline{\lambda|_{P^{3}}}:S^3\to S^0$ to be $\nu$. This verifies the conjecture in this case.\\

\textbf{Case $4,5$.} Suppose $g^s$ pulls back to $P^{n-4}$. Since ${_2\pi_4^s}\simeq 0$ as well as ${_2\pi_5^s}\simeq 0$, by a similar reasoning as above, $g^s$ factors through $P^{n-6}$ if it factors through $P^{n-4}$. \\

\textbf{Case $i=6$.} Suppose $g^s$ factors through $P^{n-6}$ but not $P^{n-7}$. In this case, $p_6g^s\in{_2\pi_6^s}\simeq\Z/2\{\nu^2\}$, so $p_6g^s$ is detected by $h_2^2$ in the ASS. According to \cite[Theorem 1.3]{CohenLinMahowald} we have a family which maps to $h_2^3$ in the Adams spectral sequence, showing that $f^s=\nu^3$. Consequently, $h(\nu^3)=0$. So, it is not possible to have $f$ with $h(f)\neq 0$. Hence, $g^s$ factors through $P^{n-7}$.\\

\textbf{Case $i=7$.} Suppose $g^s$ factors through $P^{n-7}$ but not $P^{n-8}$. In this case, $p_7g^s\in{_2\pi_7^s}\simeq\Z/16\{\sigma\}$. Since $h(g^s)\neq 0$, so $p_7g^s$ is an odd multiple of $\sigma$. We may multiply this by add an number, and without loss of generality, we may assume $p_7g^s=\sigma$. Consequently, and using computations of \cite[Families of Theorem 1.3 and Page 3]{CohenLinMahowald}, we conclude that $f^s$ is possibly detected by one of the following permanent cycles: $h_3^2$, $h_3h_4$, $h_3h_1$, $h_3h_2$. The fist one corresponds to $f=\theta_3=\sigma^2$. For the last two, $f^s=\sigma\eta$ or $f=\sigma\nu$ which by Theorem \ref{main0} we have $h(f)=0$. So, these cases cannot arise. If $f^s$ corresponds to $h_3h_4$ in the Adams spectral sequence, then $f^s\in{_2\pi_{22}^s}$ which is generated by $\nu\overline{\sigma}$ and $\eta^2\overline{\kappa}$ \cite[Table A3.3]{Ravenel-Greenbook}  which are decomposable elements and map trivially under $h$. Of course, for filtration reasons, it is not possible that $f\in\{\nu\overline{\sigma},\eta^2\overline{\kappa}\}$ as the generators of ${_2\pi_{22}^s}$ are detected in higher lines of the $E_2$-term of the ASS, and not the $2$-line.\\

\textbf{Case $i=8$.} Suppose $g^s$ through $P^{n-8}$ but not $P^{n-9}$. Then, $p_8g^s\in{_2\pi_8^s}\simeq\Z/2\{\eta\sigma,\varepsilon\}$ where $\varepsilon$ is represented by a Toda bracket $\langle\nu^2,2,\eta\rangle$. If $p_8g^s=\eta\sigma$ then, as above, we may use \cite{CohenLinMahowald} to show that $h(f)=0$. If $p_8g^s=\varepsilon$ then $g^s$ does not correspond to any of the families in the ASS for $P$ as $\varepsilon$ is detected by $c_0$ in the ASS. So, this case cannot arise at all. Consequently, we cannot have $f$ with $h(f)\neq $as above which does factor through $P^{n-8}$ but not $P^{n-9}$.\\

\textbf{Case $i=9$.} Suppose $g^s$ through $P^{n-9}$ but not $P^{n-10}$. Then, $p_9g^s\in{_2\pi_9^s}\simeq\Z/2\{\nu^3,\eta^2\sigma,\mu_9\}$ where $\mu_9$ is an element coming from ${_2\pi_9}J$ where $J$ is the fibre of the Adams operation $\psi^3-1:BSO\to BSO$.

\bibliographystyle{plain}

\begin{thebibliography}{10}

\bibitem{CLM}
 Frederick R. Cohen, Thomas J. Lada, and J.Peter May.,
{The homology of iterated loop spaces.}
{\em Lecture Notes in Mathematics. 533. Berlin-Heidelberg-New York: Springer-Verlag. VII},
{490 p. (1976)., 1976.}

\bibitem{CohenLinMahowald}
Ralph L. Cohen, Wen-Hsiung Lin, and Mark E. Mahowald. 
{The Adams spectral sequence of the real projective spaces.},
{\em Pac. J. Math.},
{134(1):27--55, 1988.}

\bibitem{Curtis}
Edward B. Curtis.,
{The Dyer-Lashof algebra and the $\Lambda$-algebra.},
{\em Ill. J. Math.},
{19:231--246, 1975.}.

\bibitem{Eccles-codimension}
Peter John Eccles.,
{Codimension one immersions and the Kervaire invariant one problem.},
{\em Math. Proc. Camb. Philos. Soc.},
{90:483--493, 1981.}.

\bibitem{HHR}
M. A. Hill, M. J. Hopkins, and D. C. Ravenel,
{On the nonexistence of elements of Kervaire invariant one.},
{\em Ann. of Math.},
Ann. of Math. (2) 184 (2016), no. 1, 1–262.

\bibitem{Kahn-Priddy}
{Daniel S. {Kahn} and Stewart B. {Priddy}},
{{Applications of the transfer to stable homotopy theory.}},
{\em Bull. Am. Math. Soc.},
{Vol. 78, 981--987, 1972.}

\bibitem{Kuhnhomology}
Nicholas J. Kuhn..
{The homology of the James-Hopf maps.},
{\em Ill. J. Math.,},
{27:315--333, 1983.}

\bibitem{Lannes-immersions}
{J. Lannes.},
{{Sur les immersions de Boy.}}
In Algebraic topology, Aarhus 1982 (Aarhus, 1982),
volume 1051 of {\em Lecture Notes in Math.}, pages 263--270. Springer, Berlin, 1984.

\bibitem{Ma}
Mark Mahowald., 
{A new innite family in ${_2\pi_*^s}$.},
{\em Topology.}, 
{16:249--256, 1977.}

\bibitem{Ravenel-Greenbook}
Douglas C. Ravenel.,
{Complex cobordism and stable homotopy groups of spheres.}
{2nd ed. Providence, RI: AMS Chelsea Publishing, 2004.}

\bibitem{Wellington}
Robert J. Wellington.,
{The unstable Adams spectral sequence for free iterated loop spaces.},
{\em Mem. Am. Math. Soc.},
{36(258):225, 1982.}.


\bibitem{Zare-PEMS}
Hadi Zare.,
{Freudenthal theorem and spherical classes in $H_*QS^0$.},
{submitted.}.

\bibitem{Za-ideal}
Hadi Zare.,
{On the Hurewicz homomorphism on the extensions of ideals in ${_2\pi_*^s}$ and spherical classes in $H_*Q_0S^0$.},
{arXiv:1504.06752.}

\end{thebibliography}

\end{document}